\DeclareMathAlphabet{\mathpzc}{OT1}{pzc}{m}{it}
\DeclareMathOperator{\RE}{Re}
\DeclareMathOperator{\IM}{Im}
\numberwithin{equation}{section}
\theoremstyle{plain}
\newtheorem{theorem}{Theorem}[section]
\newtheorem{lemma}[theorem]{Lemma}
\newtheorem{corollary}[theorem]{Corollary}
\newtheorem{remark}[theorem]{Remark}
\theoremstyle{definition}
\newtheorem{example}[theorem]{Example}
\begin{document}

\title[Combinations of Harmonic Mappings]{Directional Convexity of Combinations of Harmonic Half-Plane and Strip Mappings}

\author[S. Beig]{Subzar Beig}
\address{Department of Mathematics, Government Degree College Uri,
Baramulla--193 123,  Jammu \& Kashmir}
\email{beighsubzar@gmail.com}

\author[V. Ravichandran]{V. Ravichandran}
\address{Department of Mathematics, National Institute of Technology,
Tiruchirappalli -- 620 015, Tamil Nadu, India}
\email{ravic@nitt.edu, vravi68@gmail.com}

\dedicatory{Dedicated to the memory of Prof. Ataharul Islam}

\begin{abstract}
For $k=1,2$, let $f_k=h_k+\overline{g_k}$ be normalized harmonic right half-plane or vertical strip mappings. We consider  the convex combination $\hat{f}=\eta f_1+(1-\eta)f_2 =\eta h_1+(1-\eta)h_2 +\overline{\overline{\eta} g_1+(1-\overline{\eta})g_2}$   and   the combination $\tilde{f}=\eta h_1+(1-\eta)h_2+\overline{\eta g_1+(1-\eta)g_2}$. For real $\eta$, the two mappings $\hat{f}$ and $\tilde{f}$ are the same. We investigate the univalence and directional convexity of $\hat{f}$ and $\tilde{f}$ for $\eta\in\mathbb{C}$.  Some sufficient conditions are found for   convexity  of the combination $\tilde{f}$.
\end{abstract}

\keywords{harmonic mappings;   directional convexity;   harmonic shear; linear combination; strip mappings}

\subjclass[2020]{31A05; 30C45}
\maketitle

\section{Introduction}A domain $\Omega\subset \mathbb{C}$ is  convex in the direction  $\gamma$ $(0\leq\gamma<\pi)$, if every line parallel to the line joining the origin to the point $ e^{ i \gamma}$ has connected intersection with $\Omega$. For $\gamma=0$ (or $\pi/2)$,   a domain convex in the direction  $\gamma$  is said to be convex in the real (or imaginary) direction. A mapping $f $ is  convex in the  direction $\gamma$ if its image  is convex in the   direction $\gamma$. A mapping is convex if it is convex in every direction. Mappings  convex in some direction are    called as  the directionally  convex mappings. This paper studies the directional convexity of some combinations of harmonic mappings. Recall that a complex-valued harmonic function $f$ defined on the open unit disk $\mathbb{D}:=\left\lbrace z\in\mathbb{C}: |z|<1\right\rbrace$ can be written as $f=h+\overline{g}$, where the functions $h$ and $g$ are analytic   and are, respectively, known as analytic and co-analytic parts of $f$. By a theorem of  Lewy \cite{lewy}, it is follows that the function $f=h+\overline{g}$ is locally univalent and sense-preserving on $\mathbb{D}$ if, and only if its Jacobian $|h(z)'|^2-|g'(z)|^2>0$, or equivalently,  for $h'(z)\neq0$, the dilatation $\omega$ of $f$, defined by  $\omega=g'/h'$, satisfies $|\omega(z)|<1$ for all $z\in\mathbb{D}$.  Let $\mathcal{H}$ denotes the class of all locally univalent and sense-preserving harmonic mappings $f=h+\overline{g}$ defined on  $\mathbb{D}$ and normalized by the conditions $h(0)=h'(0)-1=0$. We shall be interested in the combinations of mappings in the subclass $\mathcal{S}_H$   of all univalent harmonic mappings in $\mathcal{H}$.

The  convex combination $\mathpzc{f}$ of the mappings  $f_k=h_k+\overline{g_k}$, $k=1,2$ in $\mathcal{S}_H$, given by \begin{equation}\label{p5eq1}
\mathpzc{f}=tf_1+(1-t)f_2=th_1+(1-t)h_2+\overline{tg_1+(1-t)g_2},\quad{} 0\leq t\leq1,
\end{equation}
is not univalent in general. See \cite{ahuja, subzar, subzar1, subzar2, subzar3, boyd, nowak} and the references therein for the other related work on the directional convexity of harmonic mappings and some of their combinations. Recently, several authors \cite{dorff, kumar, shi, sun, wang} have studied the convexity in a particular direction  of the convex combination of some subclasses of harmonic mappings  using the method of ``\textit{shear construction}'' \cite{cluine} which is described in the following lemma.

\begin{lemma} \cite{cluine}\label{p5lema6}
A locally univalent and sense-preserving harmonic mapping $f=h+\overline{g}$ on $\mathbb{D}$ is univalent and maps $\mathbb{D}$ onto a domain convex in the direction $\gamma$ $(0\leq \gamma<\pi)$ if and only if the analytic mapping $h-\emph{e}^{2 i \gamma}g$ is univalent and maps $\mathbb{D}$ onto a  domain convex in the direction $\gamma$.
\end{lemma}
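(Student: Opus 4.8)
The plan is to reduce to the case $\phi=0$ and then exploit the fact that a harmonic shear and its associated analytic function share the same imaginary part. First I would normalize: since a domain is convex in the direction $\phi$ if and only if its rotation by $e^{-i\phi}$ is convex in the real direction, I replace $f$ by $e^{-i\phi}f=e^{-i\phi}h+\overline{e^{i\phi}g}$. Writing $\tilde h=e^{-i\phi}h$ and $\tilde g=e^{i\phi}g$, one checks that $\tilde h-\tilde g=e^{-i\phi}(h-e^{2i\phi}g)$, so it suffices to prove the statement for $\phi=0$: namely that $f=h+\overline g$ is univalent and convex in the real direction if and only if $F:=h-g$ is univalent and convex in the real direction. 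The sense-preserving hypothesis gives $|g'(z)|<|h'(z)|$, equivalently $|\omega(z)|<1$, throughout $\mathbb{D}$.

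The key structural observation is that $\IM f=\IM h-\IM g=\IM F$ on $\mathbb{D}$, since $\IM\overline g=-\IM g$. Thus $f$ and $F$ have identical imaginary parts, so the level sets $\gamma_c=\{z\in\mathbb{D}:\IM f(z)=c\}=\{z\in\mathbb{D}:\IM F(z)=c\}$ coincide, and these are exactly the preimages of the horizontal line of height $c$ under either map. Because $|g'|<|h'|$ we also have $(h-g)'=h'(1-\omega)\neq0$, so $F$ is locally univalent; together with $f$ locally univalent, each $\gamma_c$ is a union of analytic arcs crossing $\mathbb{D}$.

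The crux of the argument is a monotonicity comparison along a level curve $\gamma_c$. Parametrizing $\gamma_c$ by $z(t)$, the condition $\IM F(z(t))\equiv c$ forces $F'(z)z'=(h'-g')z'$ to be real, and I write $A=h'(z)z'$, $B=g'(z)z'$, so that $A-B\in\mathbb{R}$ and $|B|<|A|$. Differentiating the two maps along $\gamma_c$ gives $\tfrac{d}{dt}\RE F=A-B$ and $\tfrac{d}{dt}\RE f=\RE(A+B)$. Since $A-B$ is real we have $\IM A=\IM B$, and $|A|>|B|$ then forces $|\RE A|>|\RE B|$; hence $A-B=\RE A-\RE B$ and $\RE(A+B)=\RE A+\RE B$ both carry the sign of $\RE A$. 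Consequently $\RE f$ and $\RE F$ are strictly monotone in the same direction along $\gamma_c$ wherever the derivative is nonzero, so $f$ maps each level arc injectively onto a horizontal segment precisely when $F$ does.

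Finally I would assemble these slicewise statements into global univalence and directional convexity. Since $\IM f=\IM F$ separates the level curves by height and $F$ is (assumed or shown to be) univalent, the monotonicity on each $\gamma_c$ together with the distinctness of the heights yields injectivity of $f$ on all of $\mathbb{D}$ and connectedness of each horizontal slice of $f(\mathbb{D})$, i.e.\ convexity in the real direction; the roles of $f$ and $F$ being interchangeable in the sign computation delivers the converse implication as well. The main obstacle I anticipate is not the sign estimate but the bookkeeping needed to promote injectivity on individual level arcs to genuine global univalence — in particular controlling the endpoints of the arcs $\gamma_c$ on $\partial\mathbb{D}$ and ruling out that two distinct arcs at the same height could overlap in the image.
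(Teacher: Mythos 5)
The paper does not prove this lemma at all: it is the shear-construction theorem of Clunie and Sheil-Small, quoted verbatim with a citation to \cite{cluine}, so the only proof to compare against is the original one — and your argument is essentially a reconstruction of it: the rotation reducing to $\phi=0$, the identity $\IM f=\IM(h-g)$, and the sign comparison of $\RE(h'z')\pm\RE(g'z')$ along level curves are exactly the classical ingredients, and your computations are correct.

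The one point you flag as "the main obstacle" dissolves more easily than you fear, and resolving it is the only thing missing from your write-up. When $F=h-g$ is univalent with image convex in the real direction, each level set $\{z\in\mathbb{D}:\IM F(z)=c\}$ is \emph{automatically} a single connected arc: it is the preimage of the connected slice $F(\mathbb{D})\cap\{w:\IM w=c\}$ under the homeomorphism $F$. So two distinct arcs at the same height simply never occur, and no control of endpoints on $\partial\mathbb{D}$ is needed; global injectivity of $f$ follows exactly as you say (equal images force equal heights, hence the same arc, hence equal preimages by monotonicity), and each horizontal slice of $f(\mathbb{D})$ is the continuous image of one arc, hence connected. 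For the converse, the same remark applies with $f$ in place of $F$, using that a univalent harmonic map is a homeomorphism onto its open image by invariance of domain. One further small tightening: your hedge "wherever the derivative is nonzero" is unnecessary, since your own estimate $|\RE B|<|\RE A|$ shows $\RE A\neq 0$ at every point of every level arc, so both $\tfrac{d}{dt}\RE f$ and $\tfrac{d}{dt}\RE F$ are nonvanishing throughout and the monotonicity is strict everywhere.
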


Dorff and Rolf \cite{dorff} proved that the convex combination of two locally univalent sense-preserving harmonic mappings  is univalent and convex in the imaginary direction if they  are convex in the imaginary direction and have  the same dilatations.  Wang \textit{et al}.\ \cite{wang} proved that the mapping $\mathpzc{f}$  given by \eqref{p5eq1} is univalent and convex in the real direction if
\begin{equation}\label{p5eq2}
h_k(z)+g_k(z)=\frac{z}{1-z}.
\end{equation}
The results in \cite{wang} were extended to a larger class of mappings by Kumar \textit{et} \textit{al}.\ \cite{kumar}. Motivated by Wang \textit{et al}.\ \cite{wang} and Kumar \textit{et} \textit{al.}\  \cite{kumar}, we study the combinations of some harmonic mappings including the right half-plane and vertical strip mappings for directional convexity. For $\eta\in\mathbb{C}$ and $f_k=h_k+\overline{g_k}$ $(k=1,2)$  in ${\mathcal{S}_H}$, we define the mappings $\hat{f}$ and $\tilde{f}$ by
\begin{align}\label{np5eq5}
\hat{f}&=\eta f_1+(1-\eta)f_2=\eta h_1+(1-\eta)h_2+\overline{\overline{\eta} g_1+(1-\overline{\eta})g_2}
\intertext{and}
\tilde{f}&=\eta h_1+(1-\eta)h_2+\overline{\eta g_1+(1-\eta)g_2}\label{p5eq5}.
\end{align}
These mappings $\hat{f}$ and $\tilde{f}$ are same as the mapping $\mathpzc{f}$ defined in \eqref{p5eq1} when $0\leq \eta < 1$.

  It is well-known \cite{muhana, dorff1} that if the function $f=h+\overline{g}\in\mathcal{S}_H$ maps $\mathbb{D}$ onto the right half-plane $\{w\in\mathbb{C}: \RE(w)>-1/2\}$, then
\[h(z)+g(z)=\frac{z}{1-z}=\int_0^z\frac{d\xi}{(1-\xi)^2},\] and if it maps $\mathbb{D}$ onto the vertical strip $\{w\in\mathbb{C}:(\beta-\pi)/(2\sin\beta) <\RE w<\alpha/(2\sin\alpha) \}$, $\pi/2<\beta<\pi$, then
\[h(z)+g(z)=\frac{1}{2i\sin\beta}\log\left(\frac{1+z e^{i\beta}}{1+z e^{-i\beta}}\right)=\int_0^z\frac{d\xi}{1+2\xi \cos{\beta}+\xi^2}.\]%
In Section 2, we show that if the dilatation $|g_k'/h_k'| < \alpha_k\leq 1$ and \[h_k(z)+\emph{e}^{2 i \mu}g_k(z)=\int_0^z \psi_{\mu, \nu}(\xi)d\xi,\] where
\begin{equation}\label{np5eq6}
\psi_{\mu,\nu}(z)=\frac{1}{1-2z e^{ -i \mu}\cos\nu+z^2 e^{-2 i \mu}},\quad \mu\in[0, \pi),~ \nu\in[0, 2\pi),
\end{equation}
then the  mapping $\hat{f}$ is univalent and convex in the direction $\mu$ for all $\eta\in\mathbb{C}$ with \[ |\eta| <\frac{(1-\alpha_1)(1-\alpha_2)}{\alpha_1+\alpha_2}.\] The directional convexity of analytic mappings are verified by the following result of Royster and Ziegler.

\begin{lemma}\cite{royster}\label{p5theom7}
Let $\phi$ be a non-constant analytic mapping in $\mathbb{D}$. Then $\phi$ maps $\mathbb{D}$ onto a domain convex in the direction $\gamma$ $(0\leq \gamma<\pi)$ if, and only if, there are real numbers $\mu$ $(0\leq\mu<\pi)$ and $\nu$ $(0\leq\nu<2\pi)$, such that
\begin{equation}\label{p5eq8}
\RE\left( e^{ i (\mu-\gamma)}(1-2z e^{- i \mu}\cos\nu+z^2 e^{-2 i \mu})\phi'(z) \right)   \geq0,\quad z\in \mathbb{D}.
\end{equation}
\end{lemma}

\begin{remark}\label{p5remak7ab}
By taking $\gamma$ or $\gamma+\pi$ equal to $\mu$  in  Lemma \ref{p5theom7}, we see a non-constant analytic mapping $\phi$ is convex in the direction $(0\leq \mu< \pi)$, if for some $\nu$ $(0\leq \nu< 2\pi)$, $\RE\left(\phi'(z)/\psi_{\mu,\nu}(z)\right)$ is either non-negative or non-positive  on $\mathbb{D}$.
\end{remark}

 In Section 3, we show that if $|g_k'/h_k'| < \alpha_k\leq 1$ and $h_k-\emph{e}^{2 i \gamma}g_k=\psi$, where $\gamma\in[0, \pi)$ and $\psi$ is an analytic mapping convex in the direction $\gamma$, then the mapping $\tilde{f}$ is univalent and convex in the direction $\gamma$ for all $\eta\in\mathbb{C}$ with $|\eta| <(1-\alpha_1)(1-\alpha_2)/(\alpha_1+\alpha_2)$. However, if $\gamma=\mu+\pi/2$ and the function $\psi$ is replaced by the function $\int_0^z \psi_{\mu, \nu}(\xi)d\xi$ where the function $\psi_{\mu, \nu}$ is defined in \eqref{np5eq6}, then the mapping $\tilde{f}$ turns out to be convex.  Moreover, if $\gamma=\mu$ and the function $\psi$ is replaced by the function $\int_0^z p(\xi)\psi_{\mu, \nu}(\xi)d\xi$, where  $p$ is an analytic function with positive real part on $\mathbb{D}$, then the mapping $\tilde{f}$ is convex in the direction $\mu$. For specific choices of $p$, our results reduce to the results of  Wang \textit{et al.} \cite[Theorem 3]{wang} and Kumar \textit{et al.} \cite[Theorem 2.3]{kumar}.

\section{ The linear combination   $\hat{f}$ }
Our first  theorem gives us a condition on the parameter $\eta\in\mathbb{C}$ so that
the mapping $\hat{f}$ given by \eqref{np5eq5} is univalent and convex in the direction $\mu$.
\begin{theorem}\label{p5theom29s}
For $k=1,2$, let the  mapping $f_k=h_k+\overline{g_k}\in\mathcal{S}_H$ satisfy
\begin{equation}\label{np5eq1}
h_k(z)+\emph{e}^{2 i \mu}g_k(z)=\int_0^z \psi_{\mu, \nu}(\xi)d\xi,
\end{equation}
where the function $\psi_{\mu,\nu}$ is given by \eqref{np5eq6}. If the dilatation $\omega_k=g_k'/h_k'$ of $f_k$ satisfy the inequality $|\omega_k|<\alpha_k\leq 1$, then the mapping $\hat{f}$ given by \eqref{np5eq5} is univalent and convex in the direction $\mu$ for all $\eta\in\mathbb{C}$ with
\begin{equation}\label{np5eq3}
|\eta|\leq\alpha:=\frac{(1-\alpha_1)(1-\alpha_2)}{2(\alpha_1+\alpha_2)}.
\end{equation}
\end{theorem}

\begin{proof}We first  show that the mapping  $\hat{f}$ is locally univalent and sense-preserving. This is done by showing that the dilatation $\omega$ of the mapping $\hat{f}$ satisfies $|\omega|<1$ on $\mathbb{D}$. Since $\omega_k$ is the dilatation of the mapping $f_k$, the dilatation $\omega$ of the mapping  $\hat{f}$  is given by
\begin{align}
\omega&=\frac{\overline{\eta} g'_1+(1-\overline{\eta})g_2'}{\eta h'_1+(1-\eta)h_2'}=\frac{\overline{\eta}\omega_1h_1'+(1-\overline{\eta})\omega_2h_2'}{\eta h'_1+(1-\eta)h_2'}. \label{np5eq12}
\end{align}
Solving $g_k'=\omega_kh_k'$ along with  \eqref{np5eq1} for $h_k'$, we get
\[
h_k'=\frac{\psi_{\mu,\nu}}{1
+\emph{e}^{2 i \mu}\omega_k}.
\]
On using the above expression for $h_k'$, the equation \eqref{np5eq12} readily gives
\[
\omega=\frac{\overline{\eta}\omega_1(1+\emph{e}^{2 i \mu}\omega_2)+(1-\overline{\eta})\omega_2(1+\emph{e}^{2 i \mu}\omega_1)}{\eta(1+e^{2 i \mu}\omega_2)+(1-\eta)(1+\emph{e}^{2 i \mu}\omega_1)}.
\]
With $\omega_k$ replaced by $\emph{e}^{-2 i \mu}\omega_k$, the above equation gives
\[
\emph{e}^{2 i \mu}\omega=\frac{\overline{\eta}\omega_1(1+\omega_2)+(1-\overline{\eta})\omega_2(1+\omega_1)}{\eta(1+\omega_2)+(1-\eta)(1+\omega_1)}
\]
and thus  the dilatation $\omega$ satisfies $|\omega|<1$  on $\mathbb{D}$ if, and only if
\[|\overline{\eta}\omega_1(1+\omega_2)+(1-\overline{\eta})\omega_2(1+\omega_1)|^2<|\eta(1+\omega_2)+(1-\eta)(1+\omega_1)|^2,\]or equivalently if, and only if
\begin{equation}\label{np5eq2}
|1+\omega_1|^2\left(1-|\omega_2|^2\right)+2\RE\left(\overline{\eta}(\omega_2-\omega_1)(1+\overline{\omega_1})(\emph{e}^{2 i \theta}-\overline{\omega_2})\right)>0,
\end{equation}
where $\theta$ is the argument of $\eta$.
Therefore,  the dilatation $\omega$ satisfies $|\omega|<1$ on $\mathbb{D}$ if \[|\eta|<\frac{|1+\omega_1|\left(1-|\omega_2|^2\right)}{2|(\omega_2-\omega_1)(\emph{e}^{2 i \theta}-\overline{\omega_2})|}.\] Again, the inequality $|\omega_k|<\alpha_k$ implies that\[\frac{|1+\omega_1)|\left(1-|\omega_2|^2\right)}{2|(\omega_2-\omega_1)(\emph{e}^{2 i \theta}-\overline{\omega_2})|}>\frac{(1-\alpha_1)(1-\alpha_2)}{2(\alpha_1+\alpha_2)}=\alpha.\] Therefore, the dilatation $\omega$ of the mapping $\hat{f}$ satisfies  $|\omega|<1$ for all $\eta$ with $|\eta|\leq\alpha$ and, therefore, the mapping  $\hat{f}$ is locally univalent and sense-preserving.

We now show that  the mapping $h-\emph{e}^{2 i \mu}g$ is convex in the direction $\mu$ for all $\eta\in\mathbb{C}$ with $|\eta|\leq\alpha$.   As the mapping $\hat{f}$ is given by \eqref{np5eq5}, we have
\[ \hat{f}=\eta f_1+(1-\eta)f_2=:h+\overline{g}\]
where
\begin{align*}
h=\eta h_1+(1-\eta)h_2\quad \text{ and }\quad g=\overline{\eta}g_1+(1-\overline{\eta})g_2.
\end{align*}
Writing  $\eta=|\eta|e^{i\theta}$, we see that
\begin{align*}
h-e^{2 i \mu}g&=h_2-e^{2 i \mu}g_2+\eta(h_1-h_2-e^{2i( \mu-\theta)}(g_1-g_2))
\end{align*}
Therefore, in view of \eqref{np5eq1}, we see that
\begin{align}
\frac{h'-\emph{e}^{2 i \mu}g'}{\psi_{\mu,\nu}}&=\frac{h_2'-e^{2 i \mu}g_2'}{h_2'+e^{2 i \mu}g_2'}+\eta\left(\frac{h_1'-e^{2i( \mu-\theta)}g_1'}{h_1'+e^{2 i \mu}g_1'}-\frac{h_2'-e^{2i( \mu-\theta)}g_2'}{h_2'+e^{2 i \mu}g_2'}\right)\notag\\&=\frac{1-e^{2 i \mu}\omega_2}{1+e^{2 i \mu}\omega_2}+\eta\left(\frac{1-e^{2i( \mu-\theta)}\omega_1}{1+e^{2 i \mu}\omega_1}-\frac{1-e^{2i( \mu-\theta)}\omega_2}{1+e^{2 i \mu}\omega_2}\right)\notag\\&=\frac{(1-e^{2 i \mu}\omega_2)(1+e^{2 i \mu}\omega_1)+\eta e^{2 i \mu}(1+ e^{-2 i \theta})(\omega_2-\omega_1)}{(1+e^{2 i \mu}\omega_1)(1+e^{2 i \mu}\omega_2)}\notag\\&=\frac{\left(\splitfrac{(1-|\omega_2|^2-2i\IM(e^{2 i \mu}\omega_2))|1+e^{2 i \mu}\omega_1|^2}{+\eta e^{2 i \mu}(1+ e^{-2 i \theta})(\omega_2-\omega_1)(1+e^{-2 i \mu}\overline{\omega_1})(1+e^{-2 i \mu}\overline{\omega_2})}\right)}{|(1+e^{2 i \mu}\omega_1)(1+e^{2 i \mu}\omega_2)|^2}.\notag
\end{align}
Above equation shows that $\RE(h'-e^{2 i \mu}g')/\psi_{\mu,\nu}>0$  on $\mathbb{D}$ if, and only if
\[
(1-|\omega_2|^2)|1+ e^{2 i \mu}\omega_1|^2+\RE(\eta e^{2 i \mu}(1+ e^{-2 i \theta})(\omega_2-\omega_1)(1+e^{-2 i \mu}\overline{\omega_1})(1+e^{-2 i \mu}\overline{\omega_2}))>0.
\]
The last inequality holds  if
\begin{equation}
|1+e^{2 i \mu}\omega_1|^2\left(1-|\omega_2|^2\right)-2|\eta| |(\omega_2-\omega_1)(1+e^{-2 i \mu}\overline{\omega_1})(1+e^{-2 i \mu}\overline{\omega_2})|>0,
\end{equation}
or equivalently if
 \[|\eta|<\frac{|1+e^{2 i \mu}\omega_1|(1-|\omega_2|^2)}{2|(\omega_1-\omega_2)(1+e^{-2 i \mu}\overline{\omega_2})|} .\] But $|\omega_k|<\alpha_k$ implies that\[\frac{|1+e^{2 i \mu}\omega_1|(1-|\omega_2|^2)}{2|(\omega_1-\omega_2)(1+e^{-2 i \mu}\overline{\omega_2})|}>\frac{(1-\alpha_1)(1-\alpha_2)}{2(\alpha_1+\alpha_2)}=\alpha.\] Hence, it follow that  $\RE((h'-e^{2 i \mu}g')/\psi_{\mu,\nu})>0$ on $\mathbb{D}$ for all $\eta$ with $|\eta|\leq\alpha$. Therefore, by Remark \ref{p5remak7ab}, the mapping $h-\emph{e}^{2 i \mu}g$ is convex in the direction $\mu$.

Since  the mapping  $\hat{f}$ is locally univalent and sense-preserving and  the mapping $h-\emph{e}^{2 i \mu}g$ is   convex in the direction $\mu$, it follows by Lemma \ref{p5lema6} that the mapping $\hat{f}$ is univalent and convex in the direction~$\mu$ for all $\eta$ with $|\eta|\leq\alpha$.
\end{proof}
The following example gives an illustration of Theorem \ref{p5theom29s}.
\begin{example}
For $k=1,2$, let the mapping $f_k=h_k+\overline{g_k}$ be  such that
\begin{align*}
h_1(z)& =-\frac{5}{16}\left(-\frac{4z}{1-z}-\log(1-z)
        +\log\left(1-\frac{z}{5}\right)\right),\\
g_1(z)& =-\frac{5}{16}\left(\frac{4}{5}\frac{z}{1-z}+\log(1-z)
        -\log\left(1-\frac{z}{5}\right)\right),\\
h_2(z)& =\frac{7}{64}\left(\frac{8z}{1-z}-\log(1-z)
+\log\left(1+\frac{z}{7}\right)\right)
\intertext{and}
g_2(z)& =\frac{7}{64}\left(\frac{8}{7}\frac{z}{1-z}+\log(1-z)
-\log\left(1+\frac{z}{7}\right)\right).
\end{align*}
Then we have
\[ h_k(z)+g_k(z)=\int_0^z\frac{1}{(1-\xi)^2}\textit{d}\xi=\frac{z}{1-z},\]
 \[ \omega_1(z)=g_1'(z)/h_1'(z)=-z/5\quad \text{and}\quad \omega_2(z)=g_2'(z)/h_2'(z)=z/7.\] Hence, by Theorem 2.1, the mapping $\hat{f}=\eta f_1+(1-\eta)f_2$ is univalent and convex in the real direction for $\eta\in\mathbb{D}$.
 
\end{example}
\section{The  combination  $\tilde{f}$}
In this section, we find some sufficient conditions for the mapping $\tilde{f}$ defined by \eqref{p5eq5} to be univalent and convex in some direction. We examine separately the case when $\eta$ is real.

\begin{theorem}\label{p5theom11} Let $\psi$ be an analytic mapping convex in the direction $\gamma\in [0, \pi)$. For $k=1,2$, let $f_k=h_k+\overline{g_k}\in\mathcal{S_H}$ satisfy the condition
\begin{equation}\label{p5eq10}
\lambda(h_1-\emph{e}^{2 i \gamma}g_1)=h_2-\emph{e}^{2 i \gamma}g_2=\lambda\psi
\end{equation} for some $\lambda\in\mathbb{R}$. If any one of the following conditions holds:
\begin{itemize}
\item[(i)] $\lambda >0$ and  $0\leq \eta\leq1$, or
  $\lambda<0$ and $\eta\leq0$, or
\item[(ii)] $\lambda=1$,  the dilatation $\omega_k$ of $f_k$ satisfies $|\omega_k|<\alpha_k\leq 1$ and $\eta\in\mathbb{C}$ such that \[ |\eta|\leq\frac{(1-\alpha_1)(1-\alpha_2)}{2(\alpha_1+\alpha_2)},\]
\end{itemize}
then the mapping  $\tilde{f}$ given by \eqref{p5eq5} is univalent and convex in the direction $\gamma$.
\end{theorem}
\begin{proof}
Since \begin{align*} 
\tilde{f} & =\eta h_1+(1-\eta)h_2+\overline{\eta g_1+(1-\eta)g_2}=:h+\overline{g},
\intertext{the equation \eqref{p5eq10} shows that}
h-\emph{e}^{2 i \gamma}g
&=\eta\left(h_1-\emph{e}^{2 i \gamma}g_1-h_2+\emph{e}^{2 i \gamma}g_2\right)+h_2-\emph{e}^{2 i \gamma}g_2\\
&=\eta(\psi-\lambda\psi)+\lambda\psi
 =\left(\eta+\lambda\left(1-\eta\right)\right)\psi.
\end{align*} Therefore, in view of the assumptions on $\psi$ and $\lambda$, the mapping $h-\emph{e}^{2 i \gamma}g$ is convex in the direction $\gamma$.

Our result follows from Lemma \ref{p5lema6} if the mapping $\tilde{f}$ is locally univalent and sense-preserving.  We show this by proving  the dilatation $\omega$ of $\tilde{f}$  satisfies $|\omega |<1$.  Since $g_k'=\omega_kh_k'$, the dilatation $\omega$ of $\tilde{f}$ is given by
\begin{align}
\omega=\frac{g'}{h'}&=\frac{\eta g'_1+(1-\eta)g_2'}{\eta h'_1+(1-\eta)h_2'}=\frac{\eta\omega_1h_1'+(1-\eta)\omega_2h_2'}{\eta h'_1+(1-\eta)h_2'}. \label{p5eq12}
\end{align}
On using $g_k'=\omega_kh_k'$ in \eqref{p5eq10}, we see that
\begin{equation}\label{p5eq13}
h_1'=\frac{\psi'}{1-\emph{e}^{2 i \gamma}\omega_1}\quad{}\text{and}\quad{}h_2'=\frac{\lambda\psi'}{1-\emph{e}^{2 i \gamma}\omega_2}.
\end{equation}
Substituting the values of $h_1'$ and $h_2'$ from \eqref{p5eq13} in  \eqref{p5eq12}, we have
\begin{equation}\label{p5eq14}
\omega=\frac{\eta\omega_1(1-\emph{e}^{2 i \gamma}\omega_2)+\lambda(1-\eta)\omega_2(1-\emph{e}^{2 i \gamma}\omega_1)}{\eta(1-\emph{e}^{2 i \gamma}\omega_2)+\lambda(1-\eta)(1-\emph{e}^{2 i \gamma}\omega_1)}.
\end{equation}
With $\omega_k$ replaced by $\emph{e}^{-2 i \gamma}\omega_k$, the above reduced to
\begin{equation}\label{p5eq14a}
\emph{e}^{2 i \gamma}\omega=\frac{\eta\omega_1(1-\omega_2)+\lambda(1-\eta)\omega_2(1-\omega_1)}{\eta(1-\omega_2)+\lambda(1-\eta)(1-\omega_1)}.
\end{equation}

Case (i). If either $\eta$ is real with $0\leq \eta\leq1$ and $\lambda>0$, or $\eta$ is real with $\eta\leq0$ and $\lambda<0$, then both \[\frac{\eta}{\eta+\lambda(1-\eta)}\quad{} \text{ and} \quad{} \frac{\lambda(1-\eta)}{\eta+\lambda(1-\eta)}\] are non-negative, and at least one of them is positive.
In this case, it is easily seen that the denominator in the above expression of $\omega$ does not vanish in $\mathbb{D}$ for the values of $\eta$ and $\lambda$. Therefore, by using \eqref{p5eq14a}, it follows that
\begin{align}
\RE\left(\frac{1+\emph{e}^{2 i \gamma}\omega}{1-\emph{e}^{2 i \gamma}\omega}\right)&
=\RE\left(\frac{\eta(1+\omega_1)(1-\omega_2)+\lambda(1-\eta)(1+\omega_2)(1-\omega_1)}
{(\eta+\lambda(1-\eta))(1-\omega_2)(1-\omega_1)}\right)\notag\\
&=\RE\left(\frac{\eta}{\eta+\lambda(1-\eta)}\frac{1+\omega_1}{1-\omega_1}\right)
+\RE\left(\frac{\lambda(1-\eta)}{\eta+\lambda(1-\eta)}\frac{1+\omega_2}{1-\omega_2}\right).
\label{p5eq15}
\end{align}
 Since $|\omega_k|=|\emph{e}^{2 i \gamma}\omega_k|<1$, we have $\RE((1+\omega_k)/(1+\omega_k))>0$ on $\mathbb{D}$. Therefore, \eqref{p5eq15} shows that
  \[\RE\left(\frac{1+\emph{e}^{2 i \gamma}\omega}{1+\emph{e}^{2 i \gamma}\omega}\right)>0\] on $\mathbb{D}$. Hence $|\omega|=|\emph{e}^{2 i \gamma}\omega|<1$ on $\mathbb{D}$, which implies that $f$ is locally univalent and sense-preserving.

 Case (ii). For $\lambda=1$, we see from \eqref{p5eq14a} that
\[\emph{e}^{2i \gamma}\omega =\frac{\eta\omega_1(1-\omega_2)+(1-\eta)\omega_2(1-\omega_1)}
{\eta(1-\omega_2)+(1-\eta)(1-\omega_1)}.\]
Above equation shows that $|\omega|<1$ on $\mathbb{D}$ if, and only if
\[|\eta\omega_1(1-\omega_2)+(1-\eta)\omega_2(1-\omega_1)|^2<|\eta(1-\omega_2)+(1-\eta)(1-\omega_1)|^2,\]or equivalently if, and only if
\begin{equation}\label{p5eq16}
|1-\omega_1|^2\left(1-|\omega_2|^2\right)+2\RE\left(\eta(\omega_1-\omega_2)(1-\overline{\omega_1})(1-\overline{\omega_2})\right)>0.
\end{equation}
Therefore, $|\omega|<1$ on $\mathbb{D}$ if \[|\eta|<\frac{|1-\omega_1|\left(1-|\omega_2|^2\right)}{2|(\omega_1-\omega_2)(1-\omega_2)|}.\] But $|\omega_k|<\alpha_k$ implies that \[\frac{|1-\omega_1|\left(1-|\omega_2|^2\right)}{2|(\omega_1-\omega_2)(1-\overline{\omega_2})|}
>\frac{(1-\alpha_1)(1-\alpha_2)}{2(\alpha_1+\alpha_2)}.\] Hence, $|\omega|<1$ for all $\eta\in\mathbb{C}$ with \[|\eta|\leq\frac{(1-\alpha_1)(1-\alpha_2)}{2(\alpha_1+\alpha_2)}.\qedhere\]
\end{proof}

\begin{remark}
Since the mapping $\phi(z):=\int_0^z\psi_{\mu,\nu}(\xi)d\xi$, where $\psi_{\mu, \nu}$ is given by \eqref{np5eq6}, is convex (convexity of $\phi$ is easily seen by observing that $\RE\left(1+z\phi''/\phi'\right)>0$ on $\mathbb{D}$), and hence convex in the direction $\gamma$. Therefore, we can take $\psi=\psi_{\mu, \nu}$ in Theorem \ref{p5theom11}. However, in this case, we will show $\tilde{f}$ in Theorem \ref{p5theom11} belongs to class $\mathcal{K}_H$ of all convex harmonic  mappings in $\mathcal{S}_H$, provided $\gamma=\mu+\pi/2$ and $\lambda=1$. In fact, we have a more general result, see  Theorem \ref{p5theom22}.
\end{remark}

 For any non-negative integer $n$, define the differential operator $\mathcal{D}^n:A  \longrightarrow A$ on the class $A$ of all analytic mappings $f$ as: $
\mathcal{D}^0f(z)=f(z)$ and $\mathcal{D}^nf(z)=z(\mathcal{D}^{n-1}f)'(z)$ for $n\geq1$.
For the harmonic mapping $f=h+\overline{g}$, define $\mathcal{D}^nf:=\mathcal{D}^nh+\overline{\mathcal{D}^ng}$. In order to prove our next result, we use the following straight forward  generalization of Sheil-Small's \cite{sheil}  result on the relation between the starlike and convex harmonic mappings.
\begin{theorem}\label{p5theom21b}
If $f = h + \overline{g}$ is a starlike harmonic mapping in $\mathcal{S}_H$, and $H$ and $G$ are the analytic mappings defined by
\[\mathcal{D}^nH=h,\quad \mathcal{D}^nG=(-1)^ng,\quad H(0) = H'(0)-1=G(0) = 0,\]
then the mapping $F = H + \overline{G}\in\mathcal{K}_H$.
\end{theorem}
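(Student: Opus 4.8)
The plan is to prove the statement by induction on $n$, taking the classical Sheil-Small theorem \cite{sheil} (the case $n=1$) as the base step and iterating it. For $n=1$ the hypotheses read $\mathcal{D}H=zH'=h$ and $\mathcal{D}G=zG'=-g$, so the assertion ``$f$ starlike $\Rightarrow F\in\mathcal{K}_H$'' is precisely Sheil-Small's result, which I would quote as the base case rather than reprove.

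For the inductive machinery I would introduce the intermediate harmonic mappings
\[
F_k:=\mathcal{D}^k H+\overline{(-1)^k\,\mathcal{D}^k G},\qquad k=0,1,\dots,n,
\]
so that $F_0=F$ and, using $\mathcal{D}^n H=h$ together with $\mathcal{D}^n G=(-1)^n g$, also $F_n=h+\overline{g}=f$. The key observation is that each consecutive pair $F_{k-1},F_k$ stands in exactly the Sheil-Small relation of the base case: writing $F_{k-1}=\widetilde H+\overline{\widetilde G}$ with $\widetilde H=\mathcal{D}^{k-1}H$ and $\widetilde G=(-1)^{k-1}\mathcal{D}^{k-1}G$, a direct computation gives $\mathcal{D}\widetilde H=\mathcal{D}^k H$ (the analytic part of $F_k$) and $\mathcal{D}\widetilde G=(-1)^{k-1}\mathcal{D}^k G=-(-1)^k\mathcal{D}^k G$, i.e. minus the co-analytic part of $F_k$. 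Hence the base case applies to every pair and yields the single-step implication ``$F_k$ starlike $\Rightarrow F_{k-1}\in\mathcal{K}_H$''.

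The bridge that lets the induction propagate is the elementary fact that a convex harmonic mapping is starlike: since $H(0)=G(0)=0$ forces $F_{k-1}(0)=0$, the convex image $F_{k-1}(\mathbb{D})$ is a convex domain containing the origin and is therefore starlike with respect to the origin. Starting from the hypothesis that $F_n=f$ is starlike, I would run the downward induction: $F_n$ starlike gives $F_{n-1}\in\mathcal{K}_H$, which gives $F_{n-1}$ starlike, which gives $F_{n-2}\in\mathcal{K}_H$, and so on, until $F_0=F\in\mathcal{K}_H$. Note that local univalence and sense-preservation come for free at each stage, since the single-step result outputs a convex (hence locally univalent) mapping and starlike harmonic mappings are automatically locally univalent and sense-preserving.

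The routine points to discharge are that each $F_k$ inherits the normalization required to invoke the base case, namely $\mathcal{D}^k H(0)=0$, $(\mathcal{D}^k H)'(0)=1$, and $\mathcal{D}^k G(0)=0$; this is an easy induction because $\mathcal{D}f(z)=zf'(z)$ annihilates the constant term and preserves the coefficient of $z$. I expect no genuine obstacle here—the whole difficulty is conceptual rather than computational. The one idea on which everything turns is the interleaving of the two facts ``Sheil-Small takes starlike to convex'' and ``convex implies starlike,'' which together let a single application of the base theorem be chained $n$ times; the only care needed beyond that is the bookkeeping of the signs $(-1)^k$ in the co-analytic parts.
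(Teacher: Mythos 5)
Your proof is correct, and the one delicate point it turns on---the sign bookkeeping $\mathcal{D}\widetilde G=(-1)^{k-1}\mathcal{D}^k G=-(-1)^k\mathcal{D}^k G$, which puts each consecutive pair $F_{k-1},F_k$ exactly in the Sheil-Small relation---checks out. For comparison: the paper offers \emph{no} proof of this statement at all; it simply records the theorem as a ``straight forward generalization'' of Sheil-Small's result (your base case $n=1$) and moves on. So there is no printed argument to measure yours against, and your downward induction, interleaving ``Sheil-Small maps starlike to convex'' with ``a convex univalent harmonic mapping vanishing at the origin is starlike with respect to the origin,'' is precisely the chaining the authors presumably intended; it supplies the details they omitted. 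Your handling of the routine points is also sound: $\mathcal{D}\phi(z)=z\phi'(z)$ preserves the normalizations $\phi(0)=0$, $\phi'(0)=1$, so every $F_k$ is eligible for the base theorem, and sense-preservation at each stage comes from the fact that the output of each step lies in $\mathcal{K}_H$. The only caveat worth recording is that both the statement and your argument implicitly require $n\geq 1$: for $n=0$ the theorem would assert that every starlike harmonic mapping is convex, which is false, and your induction correctly begins at $n=1$, matching the way the paper actually uses the result (with exponent $n-1\geq 0$ applied to mappings $\mathcal{D}^{n-1}f_k$ assumed starlike via convexity).
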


\begin{theorem}\label{p5theom22}
For $k=1,2$, $\mu\in[0, \pi)$ and  $\nu\in[0, 2\pi)$, let $f_k=h_k+\overline{g_k}$ be a harmonic mapping with $h(0)=h'(0)-1=0$. Let $\mathcal{D}^{n-1}f_k$ be locally univalent,  sense-preserving and
\begin{equation}\label{p5eq23}
\frac{h_k(z)+\emph{e}^{2 i \mu}(-1)^{n-1}g_k(z)}{z}=\frac{1}{z}\int_0^{z_n=z}\left(\cdots\frac{1}{z_1}\int_0^{z_1} \psi_{\mu,\nu}(\xi)\textit{d}\xi\cdots\right)\textit{d}z_{n-1},
\end{equation}
 where $\psi_{\mu, \nu}$ is given by \eqref{np5eq6}. If
\begin{itemize}
\item[(i)]  $0\leq \eta\leq1$, or
\item[(ii)] the dilatation $\omega_k$ of $\mathcal{D}^{n-1}f_k$ satisfies $|\omega_k|<\alpha_k\leq 1$   and  $\eta\in\mathbb{C}$ such that \[ |\eta|\leq\frac{(1-\alpha_1)(1-\alpha_2)}{2(\alpha_1+\alpha_2)},\]
\end{itemize} then the mapping $\tilde{f}$ given by \eqref{p5eq5} belongs to $\mathcal{K}_H$
\end{theorem}
\begin{proof}
Since
\begin{align}
\tilde{f}=\eta h_1+(1-\eta)h_2+\overline{\eta g_1+(1-\eta)g_2}=:h+\overline{g},\label{p5eq23a}
\end{align}
we have
\begin{align*}
h(z)+\emph{e}^{2 i \mu}g(z)&=\eta\left(h_1(z)+\emph{e}^{2 i \mu}g_1(z)-h_2(z)-\emph{e}^{2 i \mu}g_2(z)\right)+h_2(z)+\emph{e}^{2 i \mu}g_2(z)\\ &  =h_2(z)+\emph{e}^{2 i \mu}g_2(z).
\end{align*}
Let  $H(z):=\mathcal{D}^{n-1}h(z)$ and $G(z):=(-1)^{n-1}\mathcal{D}^{n-1}g(z)$.
In view of \eqref{p5eq23}, we see that
 \begin{equation}\label{p5eq24}H(z)+\emph{e}^{2 i \mu}G(z)
=\mathcal{D}^{n-1}h(z)+\emph{e}^{2 i \mu}(-1)^{n-1}\mathcal{D}^{n-1}g(z)=\int_0^z\psi_{\mu,\nu}(\xi)\textit{d}\xi,\end{equation} and hence $H'+ e^{-2 i \mu}G'=\psi_{\mu,\nu}$.
Theorem \ref{p5theom11}, in view of the assumptions on $\mathcal{D}^{n-1}f_k$, shows that the mapping $F:=H+\overline{G}$ is locally univalent and sense-preserving. We will show that it is convex. In view of Lemma \ref{p5lema6}, it suffices to show that the mapping $H- e^{2 i \theta}G$ is convex in the direction $\theta$ for all $\theta$ ranging in an interval of length $\pi$. In other words, it is sufficient to show that the mapping $ e^{ i (\mu-\theta)}(H- e^{2 i \theta}G)$ is convex in the direction $\mu$ for all $\theta$ such that $-\pi/2\leq\mu-\theta<\pi/2$. Since  $\tilde{f}$ is locally univalent and sense-preserving,  $|G'/H'|<1$ on $\mathbb{D}$, and hence  $$\RE\left(\frac{H'- e^{2 i \mu}G'}{H'+ e^{2 i \mu}G'}\right)>0.$$ Above inequality shows that
\begin{align} \RE \left(\frac{ e^{ i (\mu-\theta)}(H- e^{2 i \theta}G)'}{\psi_{\mu,\nu}}\right)  & =
\RE\left(\frac{ e^{ i (\mu-\theta)}(H- e^{2 i \theta}G)'}{H'+ e^{2 i \mu}G'}\right)\notag \\
&=\RE\left(\frac{( e^{ i (\mu-\theta)}H'- e^{2 i \mu} e^{- i (\mu-\theta)}G'}{H'+ e^{2 i \mu}G'}\right)\notag\\&=\RE\left(\cos(\mu-\theta)\frac{H'- e^{2 i \mu}G'}{H'+ e^{2 i \mu}G'}+ i \sin(\mu-\theta)\right)\notag\\&=\cos(\mu-\theta)\RE\left(\frac{H'- e^{2 i \mu}G'}{H'+ e^{2 i \mu}G'}\right)\geq 0.\label{p5eq24a}
\end{align}
 Therefore, in view of  \eqref{p5eq24a},   Remark \ref{p5remak7ab} shows that the mapping $ e^{ i (\mu-\theta)}$ $(H- e^{2 i \theta}G)$ is convex in the direction $\mu$ for all $\theta$ such that $-\pi/2\leq\mu-\theta<\pi/2$.  Thus $F$ is convex, and hence starlike. Also, \eqref{p5eq23a} shows that the normalization of $f_k$ implies the normalization of $\tilde{f}$. The result now follows by Theorem \ref{p5theom21b}.
\end{proof}
 Using Remark \ref{p5remak7ab}, Theorem \ref{p5theom11} gives  the following result.
\begin{theorem}\label{p5corl8a}
For $k=1,2$, let $f_k=h_k+\overline{g_k}\in\mathcal{S}_H$  such that
\begin{equation}\label{p5eq8b}
h_k(z)-\emph{e}^{2 i \mu}g_k(z)=\int_0^z \psi_{\mu,\nu}(\xi)p(\xi)d\xi, \quad \mu\in[0, \pi), \nu\in[0, 2\pi),
\end{equation}
 where $\psi_{\mu, \nu}$ is given by \eqref{np5eq6} and $p$ is an analytic mapping with $\RE p>0$ on $\mathbb{D}$.  If
\begin{itemize}
\item[(i)] $0\leq \eta\leq1$, or
\item[(ii)]  the dilatation $\omega_k$ of $f_k$ satisfies $|\omega_k(z)|<\alpha_k\leq 1$ and $\eta\in\mathbb{C}$ such that \[ |\eta|\leq\frac{(1-\alpha_1)(1-\alpha_2)}{2(\alpha_1+\alpha_2)},\]
\end{itemize} then the mapping $\tilde{f}$ given by \eqref{p5eq5} is univalent and convex in the direction $\mu$.
\end{theorem}
\begin{proof}
Since $\RE p>0$ on $\mathbb{D}$, we have
\begin{align*}
\RE\bigg(\frac{1}{\psi_{\mu,\nu}(z)}\left(\int_0^z \psi_{\mu,\nu}(\xi)p(\xi)d\xi\right)'\bigg)
&=\RE\bigg(\frac{1}{\psi_{\mu,\nu}(z)}\psi_{\mu,\nu}(z)p(z)\bigg)=\RE p(z)>0.
\end{align*}
 Therefore, by Remark \ref{p5remak7ab}, the mapping $\int_0^z \psi_{\mu,\nu}(\xi)p(\xi)d\xi$ is convex in the direction $\mu$. Hence, in view of equation \eqref{p5eq8b}, Theorem \ref{p5theom11} follows the result.
\end{proof}
\begin{corollary}\label{p5corl8c}
Let $\nu_1$, $\nu_2\in[0, 2\pi)$, $\mu\in[0, \pi)$ and $A,B\geq 0$ with $A+B>0$. Also, for $k=1,2$, let  $f_k=h_k+\overline{g_k}\in\mathcal{S}_H$ such that
\begin{equation}\label{p5eq8d}
h_k(z)+\emph{e}^{2 i \mu}g_k(z)=A\frac{z(1-z e^{ -i \mu}\cos\nu_1)}{1-z^2 e^{-2 i \mu}}+B\int_0^z\psi_{\mu,\nu_2}(\xi)d\xi,
\end{equation}
 where $\psi_{\mu, \nu_2}$ is defined in \eqref{np5eq6}. Then the mapping $\tilde{f}$  given by \eqref{p5eq5} is  univalent and convex in the direction $\mu+\pi/2$ for all $\eta$ given as in Theorem \ref{p5corl8a}.
 \end{corollary}
\begin{proof}
  We can write \eqref{p5eq8d} as \begin{align*}
 h_k(z)+\emph{e}^{2 i \mu}g_k(z)&=\int_0^z \bigg(A\frac{1-2\xi e^{- i \mu}\cos\nu_1+\xi^2 e^{-2 i \mu}}{(1-\xi^2 e^{-2 i \mu})^2}+B\psi_{\mu,\nu_2}\bigg)d\xi\notag\\&= \int_0^z \frac{q(\xi)d\xi}{1-\xi^2 e^{-2 i \mu}}=\int_0^z q(\xi). \psi_{\mu+\pi/2,0}(\xi)d\xi,\notag
 \end{align*}
 where $q$ is given by
 $$q(z)=A\frac{1-2z e^{- i \mu}\cos\nu_1+z^2 e^{-2 i \mu}}{1-z^2 e^{-2 i \mu}}+B\frac{1-z^2 e^{-2 i \mu}}{1-2z e^{- i \mu}\cos\nu_2+z^2 e^{-2 i \mu}}.$$
 Now, for $\gamma\in [0, 2\pi)$,
\begin{align}
\RE\left(\frac{1-z^2 e^{-2 i \mu}}{1-2z e^{- i \mu}\cos\gamma+z^2 e^{-2 i \mu}}\right)\notag&=\frac{1-|z|^4-2\cos\gamma(1-|z|^2)\RE( e^{- i \mu}z)}{|1-2z e^{- i \mu}\cos\gamma+z^2 e^{-2 i \mu}|^2}\notag\\&\geq\frac{(1-|z|^2)(1+|z|^2-2|\cos\gamma|\RE( e^{- i \mu}z))}{|1-2z e^{ -i \mu}\cos\gamma+z^2 e^{-2 i \mu}|^2}>0\notag,\quad z\in\mathbb{D}.
\end{align}
Therefore $\RE q>0$ on  $\mathbb{D}$. The proof now follows by Theorem \ref{p5corl8a}.
 \end{proof}

 \begin{remark}  Corollary \ref{p5corl8c} reduces to
  \cite[Theorem 3]{wang} of Wang \textit{et al.} when $A=1$, $B=0$, $\mu=\pi$ and $\gamma_1=0$ and to
  \cite[Theorem 2.1]{kumar} of Kumar \textit{et al.} when $A=1$, $B=0$ and $\mu=\pi$.
\end{remark}

\end{document}